\font\bbbld=msbm10 scaled\magstephalf
\newcommand{\bM}{\bar{M}}
\newcommand{\bfR}{\hbox{\bbbld R}}
\newcommand{\tF}{\tilde{F}}
\newcommand{\ul}{\underline}
\newtheorem{theorem}{Theorem}[section]
\newtheorem{lemma}[theorem]{Lemma}
\newtheorem{proposition}[theorem]{Proposition}
 \theoremstyle{definition}
\theoremstyle{remark}
\newtheorem{remark}[theorem]{Remark}
\numberwithin{equation}{section}
\begin{document}

\title[fully nonlinear parabolic equations]
{Estimates for a class of hessian type fully nonlinear parabolic equations
on Riemannian manifolds}
\author{Ge-jun Bao}
\address{Department of Mathematics, Harbin Institute of Technology,
         Harbin, 150001, China}
\email{baogj@hit.edu.cn}
\author{Wei-song Dong*}
\address{Department of Mathematics, Harbin Institute of Technology,
         Harbin, 150001, China}
\email{dweeson@gmail.com}


\begin{abstract}

In this paper, we derive \emph{a priori} estimates for the gradient and second order derivatives of
solutions to a class of Hessian type fully nonlinear parabolic equations with the first initial-boundary
value problem on Riemannian manifolds.
These \emph{a priori} estimates are derived under conditions which are nearly optimal. Especially, there
are no geometric restrictions on the boundary of the Riemannian manifolds.
And as an application, the existence of smooth solutions to the first initial-boundary value problem even for infinity time
is obtained.

\emph{Mathematical Subject Classification (2010):} 35B45, 35R01, 35K20, 35K96.

\emph{Keywords:}  Fully nonlinear parabolic equations; Riemannian manifolds;
\emph{a priori} estimates; Hessian; First initial-boundary value problem.

\end{abstract}

\maketitle

\section{Introduction}

Let $(M^n, g)$ be a compact Riemannian manifold of
dimension $n \geq 2$ with smooth boundary $\partial M$
and $\bM := M \cup \partial M$.
We study the Hessian type fully nonlinear parabolic equation
\begin{equation}
\label{eqn}
f (\lambda[\nabla^{2}u + \chi], - u_t)  = \psi (x, t)
\end{equation}
in $M_T = M \times (0,T] \subset M \times \mathbb{R}$, where $f$ is
a symmetric smooth function of $n + 1$ variables, $\nabla^2 u$ denotes the
Hessian of $u (x, t)$ with respect to the space $x \in M$, $u_t$ is the derivative
with respect to the time $t \in (0, T]$, $\chi$ is a smooth
(0, 2) tensor on $\bM$
and $\lambda [\nabla^{2} u + \chi] = (\lambda_1 ,\ldots,\lambda_n)$ denotes the
eigenvalues of $\nabla^{2} u + \chi$ with respect to the metric $g$. While the first initial-boundary
value problem requires:
\begin{equation}
\label{eqn-b}
 u = \varphi, \; \mbox{on $\mathcal{P}M_T$},
\end{equation}
where $\mathcal{P} M_T = B M_T \cup S M_T$ and $B M_T = M \times \{0\}$,
$S M_T = \partial M \times [0,T]$.
We assume $\psi \in C^{4, 1} (\overline {M_T}) $, $\varphi \in C^{4, 1}(\mathcal{P}M_T)$.

As in \cite{CNS}, we assume $f \in C^\infty (\Gamma) \cap C^0 (\overline{\Gamma})$
to be defined on an open, convex, symmetric proper subcone $\Gamma \subset \mathbb{R}^{n + 1}$ with vertex at the origin and
\[\Gamma^{+} \equiv \{\lambda \in \mathbb{R}^{n + 1}:\mbox{ each component }
   \lambda_{\ell} > 0, \; 1 \leq \ell \leq n+1\} \subseteq \Gamma.\]
In this work we assume only a few conditions on $f$, which are almost optimal, but the followings are essential as the structure conditions.
We assume that $f$ satisfies:
\begin{equation}
\label{f1}
f_{\ell} \equiv \frac{\partial f}{\partial \lambda_{\ell}} > 0 \mbox{ in } \Gamma,\ \ 1\leq \ell \leq n + 1,
\end{equation}
\begin{equation}
\label{f2}
f\mbox{ is concave in }\Gamma,
\end{equation}
and
\begin{equation}
\label{f5}
f > 0 \mbox{ in } \Gamma, \ \ f = 0 \mbox{ on } \partial \Gamma, \ \ \inf_{M_T}\psi > 0.
\end{equation}

In \cite{G} Guan has developed some methods to derive \emph{a priori} second order estimates under nearly
optimal conditions for solutions of a class of fully nonlinear elliptic equations on Riemannian manifolds.
More recently, Guan and Jiao \cite{GJ} further
developed the methods to cover more general elliptic equations. In this paper we prove the mechanism in \cite{G}
to derive second order estimates is also valid for a wild class of Hessian type fully nonlinear parabolic equations on Reimannian
manifolds.
Following \cite{GJ} we assume:
\begin{equation}
\label{f3}
T_{\lambda} \cap \partial \Gamma^{a} \; \mbox{is a
nonempty compact set, for any $\lambda \in \Gamma$ and $0 < a < f (\lambda)$},
\end{equation}
where
$\partial \Gamma^{\sigma} = \{\lambda \in \Gamma: f(\lambda) = \sigma\}$ is
the boundary of $\Gamma^{\sigma} = \{\lambda \in \Gamma: f (\lambda) > \sigma\}$
and $T_{\lambda}$ denotes
the tangent plane at $\lambda$ of $\partial \Gamma^{f (\lambda)}$,
for $\sigma \in \bfR^+$ and $\lambda \in \Gamma$. By
assumptions~(\ref{f1}) and (\ref{f2}), $\partial \Gamma^{\sigma}$  is smooth and convex.

Since we need no geometric boundary conditions, we have to assume,
and which is more convenience in application, that there exists an admissible function (see Section 2)
$\underline{u} \in C^{2, 1} (\overline {M_T})$ satisfying
\begin{equation}
\label{sub}
f(\lambda[\nabla^{2} \underline{u} + \chi], - \underline{u}_{t})  \geq \psi(x,t) \mbox{ in } M_T
\end{equation}
with $\ul u = \varphi$ on $\partial M \times (0, T]$ and $\ul u \leq \varphi$ on $\bM \times \{0\}$, which we call a \emph{subsolution}.
If the inequality \eqref{sub} holds strictly, then we call $\ul u$ a \emph{strict} subsolution.
In \cite{L}, Lieberman proved that there exists a strict subsolution
under conditions that for any compact
subset $K$ of $\overline {M_T} \times \Gamma$,
there exists a positive constant $R(K)$ such that
$ f(R\lambda) > \psi (x, t)$
for any $R \geq R(K)$, $(x, t, \lambda) \in K$,
and that there is a positive constant $R_1$ such that
$(\kappa, R_1) \in \Gamma$,
where $\kappa = (\kappa_0, \ldots, \kappa_{n - 1})$ is the space-time curvatures of $SM_T$(see \cite{L}).

Without loss of generality, we assume the compatibility condition, that is for all $x \in \bM$,
$(\lambda[\nabla^{2} \varphi(x,0) + \chi], -\varphi_t(x, 0)) \in \Gamma$, and
\begin{equation}
\label{comp}
f (\lambda [\nabla^2 \varphi (x, 0) + \chi(x)], - \varphi_t (x, 0))  = \psi (x, 0).
\end{equation}
We remark that this condition is actually ensured by the short time existence of solution to equation \eqref{eqn} and \eqref{eqn-b}.
Now we can give out our main result as below.

\begin{theorem}
\label{bd-th1}
Let $u \in C^{4, 1} (M_T) \cap C^{2, 1}(\overline {M_T})$ be an
admissible solution of (\ref{eqn}) in $M_T$ with $u = \varphi \; \mbox{on} \; \mathcal{P} M_T$.
Suppose $f$ satisfies \eqref{f1} - \eqref{sub}, the following and \eqref{comp} hold:
\begin{equation}
\label{c-290}
\sum_{\ell =1}^{n+1} f_\ell \lambda_\ell \geq - K_0 (1 + \sum_{\ell = 1}^{n + 1} f_\ell), \; \; \forall \lambda \in \Gamma.
\end{equation}
Then we have
\begin{equation}
\label{gsui-1}
\max_{\overline {M_T}} (|\nabla^2 u| + |u_t| ) \leq C,
\end{equation}
where $C > 0$ depends on $|u|_{C^1_x (\overline {M_T})}$,
$|\underline{u}|_{C^{2,1} (\overline {M_T})}$, $|\psi|_{C^{2, 1}(\overline {M_T}))}$,
$|\varphi|_{C^1_t(\overline {M_T}))}$ and other known data.
\end{theorem}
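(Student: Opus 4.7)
The plan is to adapt the Guan--Jiao elliptic $C^2$ strategy \cite{G, GJ} to the parabolic linearized operator
\[
L v := \sum_{\ell=1}^{n} f_\ell \nabla^2_{\ell\ell} v - f_{n+1}\, v_t,
\]
written in a frame diagonalizing $\nabla^2 u + \chi$. The $|u_t|$ bound is fastest: differentiating (\ref{eqn}) in $t$ yields $L u_t = \psi_t - \sum f_\ell (\chi_{\ell\ell})_t$, which is bounded, so by the parabolic maximum principle $|u_t|$ attains its extrema on $\mathcal{P}M_T$. On $SM_T$ we have $u_t = \varphi_t$, and on $BM_T$ the compatibility condition (\ref{comp}) together with monotonicity in the last argument determines $u_t(\cdot,0)$ from data.

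For the spatial Hessian I would study the auxiliary function
\[
W(x,t) = \max_{\xi\in T_xM,\,|\xi|=1}\bigl(\nabla^2 u + \chi\bigr)(\xi,\xi)\, e^{a(\underline{u}-u)}.
\]
At a maximum $(x_0,t_0)$ off $\mathcal{P}M_T$, diagonalize so that the maximizer is $\xi = e_1$ and differentiate (\ref{eqn}) twice in $e_1$. Concavity (\ref{f2}) and positivity (\ref{f1}) dispose of the third-order terms in the usual way, while the commutator curvature terms, which contribute $\sum f_\ell \lambda_\ell$ multiplied by bounded curvature factors, are controlled by the trace inequality (\ref{c-290}). Choosing $a$ large and exploiting the subsolution in the combination $L(\underline{u}-u)$ then forces $W(x_0,t_0)\le C$, reducing the global bound to $\max_{\mathcal{P}M_T}|\nabla^2 u|$.

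The boundary estimate is handled pointwise. On $BM_T$, $\nabla^2 u(\cdot,0)=\nabla^2\varphi(\cdot,0)$ is known directly. On $SM_T$, fix $(x_0,t_0)$ and pick a frame with $e_n$ inward normal to $\partial M$. Tangential-tangential components $u_{\alpha\beta}$ ($\alpha,\beta<n$) come from differentiating $u-\varphi\equiv 0$ along $\partial M$. For the mixed components $u_{\alpha n}$, I would construct a parabolic barrier of the shape
\[
\Phi = A(\underline{u}-u) + B|x-x_0|^2 - N d,
\]
with $d$ the distance to $\partial M$, majorizing $\pm (u-\varphi)_{,\alpha}$ on the parabolic boundary of a local half-cylinder; the subsolution contribution together with (\ref{c-290}) makes $L\Phi$ strictly negative on the cylinder, and comparison yields the bound without any geometric condition on $\partial M$.

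The main obstacle is the double normal bound $u_{nn}(x_0,t_0)$ on $SM_T$, which is where assumption (\ref{f3}) enters decisively. Once the remaining eigenvalues of $\nabla^2 u + \chi$ are controlled by the previous step, the tangent plane $T_\lambda$ to $\partial\Gamma^{\psi(x_0,t_0)}$ at $(\lambda[\nabla^2 u + \chi],-u_t)(x_0,t_0)$ must stay in a compact set by (\ref{f3}), which prevents the last eigenvalue from running off to $+\infty$. A Guan--Jiao linear-algebra argument combined with (\ref{c-290}) to control $\sum f_\ell$ then pins $u_{nn}$ into a bounded range, closing (\ref{gsui-1}). I expect this double-normal step, in particular the parabolic version of the compactness extraction from (\ref{f3}), to be the most delicate ingredient of the proof.
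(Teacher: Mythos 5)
Your overall architecture coincides with the paper's (bound $u_t$ first by differentiating in $t$, then a global Hessian bound from a maximum of $(\nabla^2u+\chi)(\xi,\xi)$ times an exponential weight, then boundary barriers and a Trudinger-type double-normal argument), but three steps would fail as written. First, the $u_t$ bound: $\mathcal{L}u_t=\psi_t$ being merely \emph{bounded} does not give a maximum principle for $u_t$. The paper instead applies the maximum principle to $\mp u_t+a(\underline u-u)$, using the inequality \eqref{gj}, $\mathcal{L}(\underline u-u)\ge\theta(1+\sum F^{ii}+F^\tau)$, which Theorem \ref{3I-th3} (hence hypothesis \eqref{f3}) supplies exactly when $|u_t|$, and therefore $|\lambda[\hat U]|$, is large. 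Second, in the global estimate your weight $e^{a(\underline u-u)}$ is too weak. The critical-point identity converts $F^{ii}(\nabla_iU_{11})^2/U_{11}^2$ into $F^{ii}(\nabla_i\phi)^2$, which with $\phi=a(\underline u-u)$ is of size $Ca^2\sum F^{ii}$ and cannot be absorbed by $a\,\mathcal{L}(\underline u-u)\ge a\theta(1+\sum F^{ii}+F^\tau)$ once $a$ is large. Concavity alone does not ``dispose of the third-order terms'' under these weak hypotheses: the paper needs the extra factor $e^{\frac{\delta}{2}|\nabla u|^2}$ to generate the good term $\frac{\delta}{2}F^{ii}U_{ii}^2$, the Andrews--Gerhardt refinement of concavity \eqref{gj-S130}, and the partition into the index sets $J$ and $K$ to close the argument.

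Third, your mixed normal--tangential barrier $A(\underline u-u)+B|x-x_0|^2-Nd$ omits the essential ingredient of Guan's mechanism: $\mathcal{L}\nabla_\alpha(u-\varphi)$ contains the term $|F^{ij}\Gamma^l_{i\alpha}\nabla_{jl}u|\le C\sum f_i|\lambda_i|$, which is \emph{not} bounded by $C(1+\sum f_i+f_\tau)$, and \eqref{c-290} alone does not make $\mathcal{L}$ of your barrier dominate it. The paper's barrier $\varPsi$ carries the additional term $-A_3\sum_{\gamma<n}|\nabla_\gamma(u-\varphi)|^2$, whose $\mathcal{L}$-image produces $F^{ij}U_{i\gamma}U_{j\gamma}\ge\frac12\sum_{i\ne r}f_i\lambda_i^2$ (Proposition \ref{par-1}); only then does Proposition \ref{par-2} (which uses \eqref{c-290} together with the already established $|u_t|$ bound) absorb $\sum f_i|\lambda_i|$. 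Your double-normal discussion is in the right spirit, but the actual argument is Trudinger's: one shows $m=\min_{SM_T}(\tilde F([\tilde U,-u_t])-\psi)>0$, where $\tilde F$ is the limit of $f$ as the normal eigenvalue tends to $+\infty$, via the auxiliary function $\varPhi=-\eta\nabla_n(u-\underline u)+Q$ compared against the same barrier $\varPsi$; what drives it is the concavity of $\tilde F$ and the strictness supplied by the subsolution on $SM_T$, not a compactness extraction from \eqref{f3} at the boundary point.
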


We remark that in Theorem \ref{bd-th1}, \eqref{c-290} is only needed when deriving second order boundary estimates,
and there the norms are defined as below:
\[\begin{aligned}|u|_{C^1_x (\overline {M_T})} = & |u|_{C^0 (\overline {M_T})} + |\nabla u|_{C^0 (\overline {M_T})},\;
|\varphi|_{C^1_t (\overline {M_T})} = |\varphi_t|_{C^0 (\overline {M_T})},\\
|\psi|_{C^{2, 1} (\overline {M_T})} = & |\psi|_{C^0 (\overline {M_T})}
+ |\nabla \psi|_{C^0 (\overline {M_T})} + |\nabla^2 \psi|_{C^0 (\overline {M_T})} + |\psi_t|_{C^0 (\overline {M_T})}.
\end{aligned}\]

For the gradient estimates, firstly from
$\Gamma \subset \{ \lambda \in \mathbb{R}^{n + 1}: \sum_{\ell =1}^{n + 1} \lambda_\ell \geq 0\}$,
we see that $u$ is a subsolution of
\begin{equation}
\label{ups}
\left\{\begin{aligned} - h_t +  \triangle h + \;& \sum \chi_{ii} (x, t) =  0, \; & \mbox{in} \; M_T,\\
h = \;& \varphi, \; & \mbox{on} \; \mathcal{P} M_T.
\end{aligned}\right.
\end{equation}
If we assume $h$ is the solution of the above linear equation,
we may easily get $u \leq h$ on $\overline {M_T}$ by the comparison principle. On the other hand,
since $\ul u$ is a subsolution of \eqref{eqn} and \eqref{eqn-b}, we have $\ul u \leq u$.
Therefore, we have the $C^0$ estimates that
\begin{equation}
\label{C0}
\sup_{M_T} |u| + \sup_{S M_T} |\nabla u| \leq C.
\end{equation}
While it is evident that on $B M_T$, we have $\nabla u = \nabla \varphi$. So the following theorem
completes the main work of this paper.
\begin{theorem}
\label{bd-th2}
Suppose $f$ satisfies \eqref{f1} - \eqref{f2}.
Let $u \in C^{4, 1} (M_T) \cap C^{2, 1}(\overline {M_T})$ be an
admissible solution of (\ref{eqn}) in $M_T$. Then
\begin{equation}
\label{gsui-2}
\sup_{M_T}|\nabla u| \leq C ( 1 + \sup_{\mathcal{P} M_T}|\nabla u|),
\end{equation}
where $C$ depends on $|\psi|_{C^1_x (\overline {M_T})}$,
$|u|_{C^0 (\overline {M_T})}$ and other known data, under either of the following additional assumptions:
$(\mathbf{i})$ $f$ satisfies \eqref{c-290} and
\begin{equation}
\label{f6}
f_\jmath (\lambda) \geq \nu_1 ( 1 + \sum_{\ell =1}^{n+1} f_\ell (\lambda) ) \mbox{ for any }
  \lambda \in \Gamma \mbox{ with } \lambda_\jmath < 0,
\end{equation}
where $1 \leq \jmath \leq n + 1$, and $\nu_1$ is a uniform positive constant;
$(\mathbf{ii})$ \eqref{f5} and \eqref{sub} hold, as well as that $(M, g)$ has nonnegative sectional curvature.
\end{theorem}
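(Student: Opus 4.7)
The plan is a maximum-principle argument on an auxiliary function of the form $W=\tfrac12|\nabla u|^{2}e^{\phi(u)}$, with $\phi$ to be chosen. Denote by $F^{ij}$ the derivative of $f$ with respect to its matrix argument $\nabla^{2}u+\chi$, and set $L:=F^{ij}\nabla_{i}\nabla_{j}-f_{n+1}\partial_{t}$, the linearisation of \eqref{eqn}; here $f_{n+1}=\partial f/\partial(-u_{t})>0$ by \eqref{f1}. If $\max_{\overline{M_{T}}}W$ is attained on $\mathcal{P}M_{T}$ then \eqref{gsui-2} follows from the boundary data, so I may assume the maximum lies at an interior point $(x_{0},t_{0})\in M_{T}$ with $t_{0}>0$, where $\nabla\log W=0$, $(\log W)_{t}\ge 0$, $F^{ij}(\log W)_{ij}\le 0$, and consequently $L(\log W)\le 0$.

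The first step is a Bochner-type computation. Differentiating \eqref{eqn} in direction $e_{k}$ gives $F^{ij}u_{ijk}=\psi_{k}-F^{ij}\chi_{ij,k}+f_{n+1}u_{tk}$; contracting with $u^{k}$, commuting third covariant derivatives via the Ricci identity, and noting that $f_{n+1}\partial_{t}(\tfrac12|\nabla u|^{2})=f_{n+1}u^{k}u_{kt}$ cancels its counterpart on the right, one obtains
\[
L\bigl(\tfrac{1}{2}|\nabla u|^{2}\bigr)=F^{ij}\sum_{k}u_{ki}u_{kj}+\langle\nabla u,\nabla\psi\rangle-u^{k}F^{ij}\chi_{ij,k}+F^{ij}R_{i\ell jk}u^{\ell}u^{k}.
\]
Writing $V=\tfrac12|\nabla u|^{2}$, expanding $L(\log W)=L(V)/V-F^{ij}(\log V)_{i}(\log V)_{j}+L(\phi(u))$ and using the identity $F^{ij}u_{ij}-f_{n+1}u_{t}=\sum_{\ell}f_{\ell}\lambda_{\ell}-F^{ij}\chi_{ij}$ produces a formula for $L(\log W)$ whose dominating positive contribution is $F^{ij}\sum_{k}u_{ki}u_{kj}/V$. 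At the critical point the equation $V_{i}=-\phi'(u)Vu_{i}$ says that $\nabla u$ is an eigenvector of $\nabla^{2}u$ with eigenvalue $-\phi'(u)V$, of size $|\phi'|\cdot|\nabla u|^{2}$; the corresponding direction then contributes a term of order $f_{\alpha}(\phi')^{2}|\nabla u|^{2}$ to $L(V)/V$, and this is the term I wish to exploit.

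Against this I would balance the remaining, a priori indefinite, contributions. In case $(\mathbf{i})$ the lower bound \eqref{c-290} controls $\sum_{\ell}f_{\ell}\lambda_{\ell}\ge -K_{0}(1+\sum_{\ell}f_{\ell})$, while \eqref{f6} forces $|F^{ij}\chi_{ij}|+|u^{k}F^{ij}\chi_{ij,k}|\le C(1+\sum_{\ell}f_{\ell})$ whenever some $\lambda_{\ell}<0$; choosing $\phi$ as a suitable monotone function of $u$ with a large parameter, the dominating term produced above overwhelms these $1+\sum f_{\ell}$ remainders once $|\nabla u|(x_{0},t_{0})$ is large, delivering the desired bound. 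In case $(\mathbf{ii})$ I would apply concavity of $f$ to the subsolution $\ul u$: since $f(\ul\Lambda)\ge\psi=f(\Lambda)$, concavity yields $\sum_{\ell}f_{\ell}(\ul\lambda_{\ell}-\lambda_{\ell})\ge 0$, so $\sum_{\ell}f_{\ell}\lambda_{\ell}\le\sum_{\ell}f_{\ell}\ul\lambda_{\ell}\le C(1+\sum_{\ell}f_{\ell})$; the nonnegative sectional curvature hypothesis makes the Riemann contraction $F^{ij}R_{i\ell jk}u^{\ell}u^{k}\ge 0$, so it drops harmlessly; and a $\phi$ depending on $u-\ul u$ with a large parameter closes the argument.

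The main obstacle is precisely this balancing step. The product $\phi'\sum_{\ell}f_{\ell}\lambda_{\ell}$ and the contractions $F^{ij}\chi_{ij}$, $F^{ij}R_{i\ell jk}u^{\ell}u^{k}$ are indefinite in sign, and $\sum_{\ell}f_{\ell}$ can blow up as $\Lambda$ approaches $\partial\Gamma$. Assumptions $(\mathbf{i})$ and $(\mathbf{ii})$ are designed as two genuinely different recipes for bounding these problematic terms against the single positive term of order $f_{\alpha}(\phi')^{2}|\nabla u|^{2}$—one via the structural inequalities \eqref{c-290}, \eqref{f6}, the other via the subsolution together with the curvature geometry—and once the comparison is set up, optimising the parameters in $\phi$ yields \eqref{gsui-2}.
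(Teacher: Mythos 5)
Your overall strategy (maximum principle applied to $|\nabla u|$ times an exponential of a function of $u$, Bochner computation, then balancing against the quadratic gradient term) is the same as the paper's, which uses $\log|\nabla u|-\delta\log\phi$ with $\phi=-u+\sup u+1$ in case $(\mathbf{i})$ and $\phi=u-\ul u+\sup|u-\ul u|+1$ in case $(\mathbf{ii})$. However, the "balancing step" that you yourself identify as the main obstacle is exactly where your sketch has genuine gaps, in both cases.

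In case $(\mathbf{i})$ you have misassigned the role of \eqref{f6}. The bounds $|F^{ij}\chi_{ij}|+|u^kF^{ij}\chi_{ij,k}|\le C(1+\sum_\ell f_\ell)$ hold trivially because $\{F^{ij}\}$ is positive definite with trace at most $\sum_\ell f_\ell$ and $\chi$ is smooth on a compact manifold; \eqref{f6} is not needed there. Where \eqref{f6} is indispensable is in making your "dominating term of order $f_\alpha(\phi')^2|\nabla u|^2$" actually dominate: a priori $f_\alpha$ could be negligible compared with $\sum_\ell f_\ell$, and then this term cannot absorb errors of size $C(1+\sum_\ell f_\ell)$. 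The argument must be: at the maximum point the critical equation forces the eigenvalue of $U$ in the relevant direction to satisfy $\lambda_\alpha\le C-\tfrac{\delta}{2\phi}|\nabla u|^2<0$ for $|\nabla u|$ large (the paper's \eqref{w5}), whence \eqref{f6} gives $f_\alpha\ge\nu_1(1+\sum_\ell f_\ell+f_\tau)$ and the good term becomes $\gtrsim\nu_1(\phi')^2|\nabla u|^2(1+\sum_\ell f_\ell)$. Note also that your critical-point identity makes $\nabla u$ an eigenvector of $\nabla^2u$, not of $U=\nabla^2u+\chi$, so identifying the index $\alpha$ with an eigendirection of $U$ requires the paper's more careful frame choice (take $e_1,\dots,e_n$ diagonalizing $U$ and the index with $|\nabla u|\le n\nabla_1u$), not just the eigenvector statement.

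In case $(\mathbf{ii})$ the concavity inequality $\sum_\ell f_\ell(\ul\lambda_\ell-\lambda_\ell)\ge0$ is too weak to close the argument: it yields only a nonnegative contribution, while you must absorb an error of order $C(1+\sum F^{ii})/|\nabla u|$, and without \eqref{f6} there is no lower bound on the coefficient of your quadratic gradient term. The paper instead uses the dichotomy underlying \eqref{gj}: if $|\lambda[\hat U]|\ge R$, Theorem \ref{3I-th3} (which uses \eqref{f3} and the subsolution, not bare concavity) gives the strict inequality $\mathcal{L}(\ul u-u)\ge\theta(1+\sum F^{ii}+F^\tau)$, which beats the error once $|\nabla u|$ is large; if $|\lambda[\hat U]|<R$, then \eqref{f5} gives uniform parabolicity $\{F^{ij}\}\ge c_0I$, and the term $c_0|\nabla(u-\ul u)|^2/\phi^2$ does the job. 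Your sketch contains neither mechanism, so as written the case $(\mathbf{ii})$ argument does not close.
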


Based on the above \emph{a priori} estimates and \eqref{f5}, equation \eqref{eqn}
becomes uniformly parabolic equation. Then
by Evans-Krylov Theorem \cite{Evans,K}, we can obtain the \emph{a priori} $C^{2 + \alpha, 1 + \alpha/2}$
estimates. Therefore it is possible to apply the theory of linear uniformly
parabolic equations (see \cite{L} for more) to get higher order estimates.
We remark that, the \emph{a priori} estimates in Theorem \ref{bd-th1}, Theorem \ref{bd-th2} and \eqref{C0} do not depend on
the time $t$ explicitly, and as a byproduct of these estimates, we have the following (long time, i.e. $T = \infty$) existence results.
We note that a function in $C^{\infty} (\overline {M_T})$ means that it is sufficiently smooth about $(x, t) \in \overline {M_T}$,
and note $M_\infty = M \times \{t > 0\}$.
\begin{theorem}
\label{jsui-th4}
Let $\psi \in C^{\infty} (\overline {M_T}) $ and $\varphi \in C^{\infty} (\mathcal{P} M_T)$, $0 < T \leq \infty$.
Suppose $f$ satisfies \eqref{f1} - \eqref{sub}, and \eqref{comp} holds. In addition that
either \eqref{c-290} and \eqref{f6} or
$(M^n, g)$ has nonnegative sectional curvature holds. Then
there exists a unique admissible solution $u \in C^{\infty} (\overline {M_T})$ to
the first initial-boundary value problem
\eqref{eqn} and \eqref{eqn-b}.
\end{theorem}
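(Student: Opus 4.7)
The plan is to combine the a priori estimates of Theorems~\ref{bd-th1} and \ref{bd-th2} with the $C^0$ bound \eqref{C0} in a standard continuity/bootstrap argument, producing a global smooth admissible solution whose uniqueness follows from the parabolic maximum principle. First I would invoke short-time existence: under the compatibility condition \eqref{comp}, the linearization of \eqref{eqn} at $t=0$ is uniformly parabolic on the admissible cone, so classical parabolic theory (e.g.\ \cite{L}) yields a unique admissible solution $u \in C^{2+\alpha,1+\alpha/2}(\overline{M_{T_0}})$ on some interval $[0, T_0]$. Let $T^\ast \leq \infty$ be the supremum of all $T > 0$ for which such an admissible solution exists on $\overline{M_T}$; the goal is to show $T^\ast = \infty$.

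The crucial step will be to upgrade the a priori $C^{2,1}$ bounds to all-order estimates uniform in $T$. By \eqref{C0}, Theorem~\ref{bd-th2} (under either hypothesis (i) or (ii)), and Theorem~\ref{bd-th1}, any admissible solution satisfies
\[
\sup_{\overline{M_T}}\bigl(|u| + |\nabla u| + |\nabla^2 u| + |u_t|\bigr) \leq C_0
\]
with $C_0$ independent of $T$. Since $\inf_{M_T}\psi>0$, the vector $(\lambda[\nabla^2 u + \chi], -u_t)$ is trapped in a compact subset of $\Gamma$ by \eqref{f5}, so \eqref{eqn} is uniformly parabolic on the solution with $f$ concave by \eqref{f2}. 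The parabolic Evans--Krylov theorem \cite{Evans,K}, applied in the interior and, via the compatibility at $t=0$ and the smoothness of $\varphi$, up to $\mathcal{P}M_T$, then yields a uniform $C^{2+\alpha,1+\alpha/2}$ estimate. Differentiating the equation reduces higher regularity to linear Schauder theory with Hölder coefficients; iterating produces $C^{k+\alpha,(k+\alpha)/2}$ bounds for every $k$, all independent of $T$.

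With these $T$-independent estimates, suppose $T^\ast<\infty$. Extracting a convergent subsequence as $t \uparrow T^\ast$ gives an admissible limit in $C^{2+\alpha,1+\alpha/2}(\overline{M_{T^\ast}})$; reapplying short-time existence from time $T^\ast$ extends the solution past $T^\ast$, contradicting maximality. Hence $T^\ast=\infty$, and the uniform higher-order bounds place the solution in $C^\infty(\overline{M_T})$ for every finite $T$, whence $u \in C^\infty(\overline{M_\infty})$. For uniqueness, if $u,v$ are two admissible solutions agreeing on $\mathcal{P}M_T$, then by \eqref{f1} and \eqref{f2} the difference $w=u-v$ satisfies a linear uniformly parabolic inequality with vanishing parabolic-boundary data, so the maximum principle forces $w\equiv 0$.

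The main obstacle I expect is the boundary half of the Evans--Krylov/Schauder step: establishing $C^{2+\alpha,1+\alpha/2}$ regularity \emph{up to} the parabolic boundary $\mathcal{P}M_T$, including the corner $\partial M \times \{0\}$. The compatibility \eqref{comp} together with the smoothness of $\varphi$ and $\psi$ are exactly what is needed to make this work, but the argument requires local flattening of $\partial M$ and a careful boundary Hölder estimate for second derivatives of concave fully nonlinear parabolic equations. Away from this technicality, the continuation and bootstrap are entirely routine.
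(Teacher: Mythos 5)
Your proposal is correct and follows essentially the same route as the paper: short-time existence from \cite{L}, the $T$-independent bounds of \eqref{C0} and Theorems~\ref{bd-th1}--\ref{bd-th2} making \eqref{eqn} uniformly parabolic, Evans--Krylov plus linear Schauder bootstrap for higher regularity, and a continuation argument (the paper phrases the restart as taking $\varphi = u$ at each new initial time so that compatibility holds automatically), with uniqueness from the parabolic comparison principle. No substantive differences to report.
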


The above theorem is a direct result of the short time existence and the uniform estimates
in Theorem \ref{bd-th1} and Theorem \ref{bd-th2}, because at each
beginning time we can take $\varphi = u$,
which enables us to assume the compatibility condition, for more one can see Theorem 15.9 in \cite{L}.

Here we give some typical examples of our equation,
for example $f = \sigma^{1/k}_k$ for $k \geq 2$ (the reason why $k$ cannot equal to $1$
is due to condition \eqref{f3}, see \cite{G}) and $f = (\sigma_k / \sigma_l)^{1/(k - l)}$,
$1 \leq l < k \leq n + 1$, both of which are defined on the cone
$\Gamma_{k} = \{\lambda \in \mathbb{R}^{n+1}: \sigma_{j} (\lambda) > 0, j = 1, \ldots, k\}$,
where $\sigma_{k} (\lambda)$ are the elementary symmetric functions
$\sigma_{k} (\lambda) = \sum_ {i_{1} < \ldots < i_{k}}
\lambda_{i_{1}} \ldots \lambda_{i_{k}}$.
Another interesting example is given by $f = \log P_k$, where
\[P_k (\lambda) = \prod_{i_1 < \cdots < i_k} (\lambda_{i_1} + \cdots + \lambda_{i_k}), \ \ 1 \leq k \leq n + 1,\]
defined in the cone
$\mathcal{P}_k := \{\lambda \in \mathbb{R}^{n + 1}: \lambda_{i_1} + \cdots + \lambda_{i_k} > 0\}$.

Krylov in \cite{K1} introduced three parabolic type equations analogous to  Monge-Amp\`{e}re equation in $\mathbb{R}^n$.
One type which is studied on Riemannian manifolds by Jiao and Sui in \cite{JS} recently is
\begin{equation}
\label{eqn-js}
f (\lambda[\nabla^{2}u + \chi]) - u_t = \psi (x, t),
\end{equation}
under assumptions that
$\inf_{\mathcal{P} M_T} (\varphi_t + \psi) = \nu_0 > 0$,
and
$\psi (x, t)$ is concave with respect to $x \in M$.
This type equation in $\mathbb{R}^n$ when $f = \sigma_n^{1/n}$
with $\chi \equiv 0$
was firstly considered by Ivochkina and Ladyzhenskaya in \cite{IL1} and \cite{IL2}.
Another type is
\begin{equation}
\label{MA-p}
- u_t \det(\nabla^2 u) = \psi^{n + 1},
\end{equation}
which is a typical form of our case in $\mathbb{R}^{n + 1}$ with $\chi \equiv 0$.
Some other cases can be fined in Chou and Wang \cite{CW2001} or Wang \cite{WX2}.

At the end of the introduction, we describe the outline of our paper.
In Section 2, we state some preliminaries and introduce our main tool (Theorem \ref{3I-th3})
to establish the $C^2$ \emph{a priori} estimates, and two propositions which are needed when
deriving the second order estimates on boundary.
In Section 3, we establish the estimates for $|u_t|$ that do not depend on $T$ explicitly,
after which we have a bound for the
constant $C(\epsilon, |u_t|, K_0, \sup_{M_T} \psi)$ in Proposition \ref{par-2}.
Then the mechanism in \cite{G} is valid for the second order boundary estimates,
and the global and boundary estimates for second order derivatives are derived in
Section 4 and Section 5 respectively.
In Section 6, we establish the interior gradient estimates as the end.

\section{Preliminaries}
\label{gj-P}
\setcounter{equation}{0}
\medskip
From now on, we stipulate that the Latin alphabet $i, j, k, \cdots $ are valued
between $1$ and $n$ when there is no other statement.
Firstly, we give some notations and formulas on Riemannian manifolds,
throughout the paper $\nabla$ denotes the Levi-Civita connection
of $(M^n, g)$.
Let $e_1, \ldots, e_n$ be a local frame on $M^n$. We denote
$g_{ij} = g (e_i, e_j)$, $\{g^{ij}\} = \{g_{ij}\}^{-1}$.
Define the Christoffel symbols $\Gamma_{ij}^k$ by
$\nabla_{e_i} e_j = \Gamma_{ij}^k e_k$ and the curvature
coefficients
\[  R_{ijkl} = g( R (e_k, e_l) e_j, e_i), \;\; R^i_{jkl} = g^{im} R_{mjkl}.  \]
We shall use the notation $\nabla_i = \nabla_{e_i}$,
$\nabla_{ij} = \nabla_i \nabla_j - \Gamma_{ij}^k \nabla_k $, etc. Finally we recall the following formula
on Riemannian manifolds
\begin{equation}
\label{hess-A70}
\nabla_{ijk} v - \nabla_{jik} v = R^l_{kij} \nabla_l v,
\end{equation}
which will be frequently used in following sections.

Let $u$ be an admissible solution of equation~\eqref{eqn}.
For simplicity we define $U \equiv \nabla^2 u + \chi$,
$\ul U \equiv \nabla^2 \ul u + \chi$
and under an orthonormal local frame $e_1, \ldots, e_{n}$, we write
$U_{ij} \equiv U (e_i, e_j) = \nabla_{ij} u + \chi_{ij}$.
Direct calculating yields that
\[\begin{aligned}
\nabla_k U_{ij} \equiv & \nabla U (e_k, e_i, e_j)
                           =   \nabla_k \chi_{ij} + \nabla_{kij} u, \\
\nabla_{kl} U_{ij} \equiv & \nabla^2 U (e_k, e_l, e_i, e_j )
                           =  \nabla_k \nabla_l U_{ij} - \Gamma_{kl}^m \nabla_m U_{ij}.
\end{aligned}\]
For the convenience, sometimes we denote $- u_t$ by $U_{n+1 n+1}$, i.e. $U_{n+1 n+1} = -u_t$,
and $U_{i n+1} = U_{n+1 i} = 0$ where $1 \leq i \leq n$.

Let $F$ be the function defined by $F (A) = f (\lambda [A])$
for $A \in \mathbb{S}^{n + 1}$ with $\lambda [A] \in \Gamma$,
where $\mathbb{S}^{n + 1}$ is the set of $(n + 1) \times (n + 1)$ symmetric matrices.
We call a function $u \in C^{2, 1}(M_T)$ \emph{admissible} if
$(\lambda[\nabla^{2} u + \chi], - u_t) \in \Gamma$ in $M_T$.
It is shown in \cite{CNS} that \eqref{f1} ensures that equation \eqref{eqn}
is parabolic (i.e. $\{\partial F (A) / \partial A_{ij}\}$ is positive
definite) with respect to admissible solutions, while
\eqref{f2} implies that the function $F$ is concave.
For an admissible solution  $u \in C^{2, 1}(M_T)$  denote
\[
\hat{U} \equiv [U_{ij}, - u_t] \equiv
\left(                  
  \begin{array}{cc}     
    U_{ij} & 0\\   
    0 & -u_t\\   
  \end{array}
\right),
\]
under an orthonormal local frame $e_1, \ldots, e_n$. Therefore equation \eqref{eqn} can be locally written as
\begin{equation}
\label{eqn'}
 F (\hat{U}) = \psi (x, t).
\end{equation}
We denote
\[ F^{ij} = \frac{\partial F}{\partial A_{ij}} (\hat{U}), \;\;
  F^{ij, kl} = \frac{\partial^2 F}{\partial A_{ij} \partial A_{kl}} (\hat{U}), \;\;
  F^{\tau} = \frac{\partial F}{\partial A_{n + 1, n + 1}} (\hat{U}) \equiv f_\tau. \]
The matrix $[\{F^{ij}\}, F^\tau]$ has eigenvalues $f_1, \ldots, f_n, f_\tau$ and
is positive definite by assumption \eqref{f1}.
Moreover, when $[\{U_{ij}\}, - u_t]$ is diagonal so is $[\{F^{ij}\}, F^\tau]$, and as in \cite{G} the
following identities hold
\[   F^{ij} U_{ij} = \sum f_i \lambda_i, \;\; F^{ij} U_{ik} U_{kj} = \sum f_i \lambda_i^2,\]
where $\lambda (\{U_{ij}\}) = (\lambda_1, \ldots, \lambda_{n})$.

The following theorem proved in \cite{GJ} is the keystone in deriving \emph{a priori} $C^2$ estimates in our paper.
\begin{theorem}
\label{3I-th3}
Suppose $f$ satisfies \eqref{f1}, \eqref{f2} and \eqref{f3}.
Let $\Im $ be a compact set of $\Gamma$ and
$\sup_{\partial \Gamma} f < a \leq b < \sup_{\Gamma} f$.
There exist positive constants $\theta = \theta (\Im, [a, b])$
and $R = R (\Im, [a, b])$ such that for any
$\lambda \in \Gamma^{[a, b]} = \bar \Gamma^a \backslash \Gamma^b$,
when $|\lambda| \geq R$,
\begin{equation}
\label{3I-100}
\sum_{\ell=1}^{n + 1} f_\ell (\lambda) (\mu_{\ell} - \lambda_\ell) \geq
     \theta + \theta \sum_{\ell=1}^{n + 1} f_\ell (\lambda) + f (\mu) - f (\lambda), \; \forall \mu \in \Im.
\end{equation}
\end{theorem}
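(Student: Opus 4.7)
The baseline is the concavity hypothesis \eqref{f2}, which gives
$$G(\lambda,\mu) := \sum_{\ell=1}^{n+1} f_\ell(\lambda)(\mu_\ell - \lambda_\ell) - (f(\mu) - f(\lambda)) \geq 0, \qquad \lambda,\mu\in\Gamma,$$
so the task is the quantitative strengthening $G(\lambda,\mu) \geq \theta(1+\sum_\ell f_\ell(\lambda))$ uniformly for $\mu\in\Im$ and $\lambda\in\Gamma^{[a,b]}$ with $|\lambda|\geq R$. Since $\Im$ is compact in the open cone $\Gamma$, pick $\epsilon_0>0$ with $\mu-\epsilon_0\vec{1}\in\Gamma$ for every $\mu\in\Im$, where $\vec{1}=(1,\dots,1)$.

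Applying concavity to the pair $(\lambda,\mu-\epsilon_0\vec{1})$ and rearranging yields
$$G(\lambda,\mu) \geq \epsilon_0\sum_{\ell=1}^{n+1} f_\ell(\lambda) + \bigl(f(\mu-\epsilon_0\vec{1}) - f(\mu)\bigr) \geq \epsilon_0\sum_{\ell=1}^{n+1} f_\ell(\lambda) - C_0,$$
with $C_0=C_0(\Im,\epsilon_0)$ coming from the continuity of $f$ on the compact set $\Im\cup(\Im-\epsilon_0\vec{1})$. This already delivers the conclusion with $\theta=\epsilon_0/2$ as soon as $\sum_\ell f_\ell(\lambda)\geq N_0$ for a fixed $N_0$, with no restriction on $|\lambda|$. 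The remaining case is $\lambda\in\Gamma^{[a,b]}$ with $|\lambda|\to\infty$ while $\sum_\ell f_\ell(\lambda)$ stays bounded, and here condition \eqref{f3} is what does the work.

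In this remaining regime I would argue by contradiction. Suppose sequences $\lambda_k\in\Gamma^{[a,b]}$, $\mu_k\in\Im$, $\theta_k\to 0$, with $|\lambda_k|\to\infty$ and $\sum_\ell f_\ell(\lambda_k)\leq N_0$, falsify \eqref{3I-100}. After passing to a subsequence: $\mu_k\to\mu_\infty\in\Im$, $f(\lambda_k)\to f_\infty\in[a,b]$, $\lambda_k/|\lambda_k|\to e_\infty\in\bar\Gamma$, and $\vec f(\lambda_k)\to \vec f_\infty$ with nonnegative entries by \eqref{f1}. The failure, together with boundedness of $\vec f(\lambda_k)\cdot\mu_k$, $f(\mu_k)$ and $f(\lambda_k)$, forces $\vec f_\infty\cdot e_\infty = 0$: the limiting outer normal to the level surface is perpendicular to the direction in which $\lambda_k$ escapes. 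Combined with $f_\infty\in[a,b]$ and the smoothness and convexity of the level sets granted by \eqref{f1}--\eqref{f2}, this produces, for any $a'\in(\sup_{\partial\Gamma}f,\,a)$ avoiding $f(\mu_\infty)$, an unbounded sequence of points in $T_{\lambda_k}\cap\partial\Gamma^{a'}$, directly contradicting the compactness assertion in \eqref{f3}.

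The main obstacle is this last step: both the point $\lambda_k$ and the hyperplane $T_{\lambda_k}$ escape to infinity, so to extract a meaningful geometric limit one must translate by $-\lambda_k$ (or by a fixed basepoint on $\partial\Gamma^{a'}$) and pass to the limit of affine hyperplanes carefully, while keeping track of the limiting normal direction $\vec f_\infty$ and level value $f_\infty$. One must also rule out the degenerate possibility $\vec f(\lambda_k)\to 0$, which is eliminated by the baseline concavity inequality (it would force $f(\lambda_k)\geq f(\mu_k)$ in the limit, allowing one to pick $a'$ above $f(\mu_\infty)$). Finally, converting the single-$\lambda$ assertion in \eqref{f3} into a uniform-in-$\lambda$ statement over $\Gamma^{[a,b]}$ is what governs the dependence of the eventual constants $\theta$ and $R$ on $\Im$ and $[a,b]$.
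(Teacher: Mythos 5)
First, a point of reference: the paper does not prove Theorem~\ref{3I-th3} at all --- it is quoted from Guan--Jiao \cite{GJ} --- so your attempt has to be judged against that source. Your first half is correct and is the standard opening move: applying concavity at $\mu-\epsilon_0\vec{1}$ gives $G(\lambda,\mu)\geq\epsilon_0\sum_\ell f_\ell(\lambda)-C_0$, which settles the case $\sum_\ell f_\ell(\lambda)\geq N_0$ with $\theta=\epsilon_0/2$ and no largeness of $|\lambda|$, and correctly isolates the remaining regime $\sum_\ell f_\ell(\lambda)\leq N_0$, $|\lambda|\to\infty$. The genuine gap is in how you close that remaining case. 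Your proposed contradiction --- ``an unbounded sequence of points in $T_{\lambda_k}\cap\partial\Gamma^{a'}$'' --- does not contradict \eqref{f3}: that hypothesis asserts compactness of $T_\lambda\cap\partial\Gamma^{a'}$ for each \emph{fixed} $\lambda$ separately, and a sequence $p_k\in T_{\lambda_k}\cap\partial\Gamma^{a'}$ with $|p_k|\to\infty$ as $k$ varies is perfectly consistent with each individual set being compact. You acknowledge that a uniform-in-$\lambda$ version of \eqref{f3} would be needed, but that uniformization is essentially the content of the theorem; it cannot be invoked as an input.

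The idea you are missing is that the failing sequence forces much more than $\vec f_\infty\cdot e_\infty=0$. Set $h_k(\mu)=f(\lambda_k)+\sum_\ell f_\ell(\lambda_k)(\mu_\ell-\lambda_{k,\ell})$. Concavity gives $h_k\geq f$ on $\Gamma$, while the failure of \eqref{3I-100} together with $\sum_\ell f_\ell(\lambda_k)\leq N_0$ gives $h_k(\mu_k)-f(\mu_k)\to 0$; the numbers $\sum_\ell f_\ell(\lambda_k)\lambda_{k,\ell}$ are bounded (above by concavity tested against $\mu_k$, below by the failure inequality), so $h_k$ converges locally uniformly to an affine $h_0\geq f$ that touches $f$ at the \emph{interior} point $\mu_\infty$. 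Differentiability of $f$ then forces $\vec f_\infty=Df(\mu_\infty)$, which in particular rules out $\vec f_\infty=0$ for free. Condition \eqref{f3} is then applied at the single fixed point $\mu_\infty$ --- not at the escaping $\lambda_k$ --- with $a'\in(\sup_{\partial\Gamma}f,\min\{a,f(\mu_\infty)\})$: since $e_\infty$ is a recession direction of the convex set $\{f\geq a\}\subset\overline{\Gamma^{a'}}$ and $Df(\mu_\infty)\cdot e_\infty=0$, the ray $\mu_\infty+te_\infty$, $t\geq 0$, lies in $T_{\mu_\infty}\cap\overline{\Gamma^{a'}}$, and an unbounded proper convex slice of a hyperplane of dimension $n\geq 2$ cannot have compact nonempty relative boundary, contradicting \eqref{f3} at $\mu_\infty$. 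Without the identification $\vec f_\infty=Df(\mu_\infty)$, which converts the escaping tangent planes into a statement about a fixed one, your contradiction does not close.
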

For $\forall \, v \in C^{2, 1} (\overline {M_T})$, we define the linear operator $\mathcal{L}$ by
\[\mathcal{L} v = F^{ij} \nabla_{ij} v - F^\tau v_t .\]
Choose a smooth orthonormal local frame
$e_1, \ldots, e_n$ about $(x, t)$ such that $\{U_{ij}(x,t)\}$ is diagonal.
From Lemma 6.2 in \cite{CNS} and Theorem \ref{3I-th3}, it is easily
to prove that there exist positive constants $\theta$, $R$ depending only
on $\ul u$ and $\psi$ such that when $|\lambda| = |\lambda [\hat{U}]| \geq R$,
\begin{equation}
\label{gj}
\mathcal{L} (\ul u - u) = F^{ii} ( \ul{U_{ii}} - U_{ii} ) + F^{\tau} (-\ul u_t + u_t) \geq \theta (1+ \sum F^{ii} + F^{\tau}).
\end{equation}

\begin{remark}
\label{rmk-0}
If $\ul u$ is a strict subsolution.
Note that $\{\lambda (\ul {\hat{U}}) : (x, t) \in \overline {M_T}\}$
is contained in a compact
subset of $\Gamma$, here
$\ul {\hat{U}} =  [\ul U, - \ul u_t]$.
We see that there exist constants $\varepsilon, \delta_0 > 0$ such that $\lambda
[\ul U (x, t) - \varepsilon g, -\ul {u}_t - \varepsilon] \in \Gamma$ for all $(x, t) \in \overline {M_T}$ and
$F ( [\ul U - \varepsilon g, - \ul u_t - \varepsilon])  \geq \psi + \delta_0$.
By the concavity of $F$, we have
\[
\begin{aligned}
 F^{ij} ( \ul{U}_{ij} - U_{ij} ) - F^{\tau} ( \ul{u}_t - u_t )
   \geq \varepsilon (\sum F^{ii} + F^\tau) + \delta_0.
\end{aligned}
\]
That means \eqref{gj} is valid in the whole $\overline {M_T}$ if $\ul u$ is a strict subsolution.
\end{remark}

The following two propositions play the key role in the second order boundary estimates,
which are the generalized counterpart results in \cite{G}.
\begin{proposition}
\label{par-1}
Let $F(\hat U) = f (\lambda (U), - u_t)$. There is an index $1 \leq r \leq n$ such that
\begin{equation}
\label{par-11}
\sum_{l<n} F^{ij} U_{il} U_{lj} \geq \frac{1}{2} \sum_{i \neq r} f_i \lambda_i^2.
\end{equation}
\end{proposition}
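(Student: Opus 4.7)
My plan is to rewrite the truncated sum as a full trace minus a rank-one correction, pass to the simultaneous eigenbasis of $U$ and $\{F^{ij}\}$, and then choose $r$ by a pigeonhole argument.

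First I would separate out the omitted $l=n$ term using orthonormality of $e_1,\ldots,e_n$:
\[
\sum_{l<n} F^{ij}U_{il}U_{lj} = F^{ij}(U^2)_{ij} - F^{ij}(Ue_n)_i(Ue_n)_j,
\]
so the left-hand side equals $\tr(FU^2)$ minus the rank-one form $F^{ij}v_iv_j$ evaluated on the vector $v = Ue_n$.

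Next, because $F$ is a symmetric function of the eigenvalues of $U$, the matrix $\{F^{ij}\}$ is diagonalized by the same orthonormal frame $\tilde e_1,\ldots,\tilde e_n$ as $U$, with eigenvalues $f_1,\ldots,f_n$. Writing $\beta_m = \langle e_n,\tilde e_m\rangle$, so that $\sum_m\beta_m^2 = 1$, a direct computation in the tilde basis produces the key identity
\[
\sum_{l<n} F^{ij}U_{il}U_{lj} = \sum_{m=1}^n f_m \lambda_m^2 (1-\beta_m^2).
\]

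The last step is to select $r$ via pigeonhole. Since $\sum_m \beta_m^2 = 1$, at most one index $m$ can satisfy $\beta_m^2 \geq 1/2$; I would take that index as $r$ (or pick $r$ arbitrarily if no such index exists). Then $1-\beta_m^2 \geq 1/2$ for every $m\neq r$, and discarding the nonnegative $m=r$ contribution immediately yields
\[
\sum_m f_m\lambda_m^2(1-\beta_m^2) \;\geq\; \tfrac12\sum_{m\neq r} f_m\lambda_m^2,
\]
which is the claim.

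I do not anticipate a serious obstacle. The only real observation is that the distinguished direction $e_n$ can be strongly aligned (i.e., with $\beta^2\geq 1/2$) with at most one eigenvector of $U$, and the proposition conveniently permits exactly one eigenvalue to be dropped from the lower bound, so the pigeonhole choice matches the statement precisely.
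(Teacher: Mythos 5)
Your argument is correct and is precisely the standard proof of Proposition 2.7 in Guan's paper \cite{G}, which is exactly what the authors invoke here (they omit the proof and refer to that result). The decomposition into $\tr(FU^2)$ minus the rank-one term, the passage to the common eigenbasis of $U$ and $\{F^{ij}\}$ giving $\sum_m f_m\lambda_m^2(1-\beta_m^2)$, and the pigeonhole choice of $r$ as the unique index (if any) with $\beta_r^2\geq 1/2$ all coincide with that proof.
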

This can be proved by exactly the same method as the prove of
Proposition 2.7 in \cite{G}. So we omit the proof.

One more result we need is the following which actually is a combination of
generalized Lemma 2.8 and Corollary 2.9 in \cite{G}. The method of this proof
is from \cite{GSS}.
\begin{proposition}
\label{par-2}
Suppose $f = f (\lambda (U), - u_t)$ satisfies \eqref{f1}, \eqref{f2} and \eqref{c-290}. Then for any index $1 \leq r \leq n$ and
$\epsilon > 0$,
\begin{equation}
\label{par-21}
\sum f_i |\lambda_i| \leq \epsilon \sum_{i \neq r} f_i \lambda_i^2 +
C(\epsilon, |u_t|, K_0, \sup_{M_T} \psi) (1 + \sum f_i + f_\tau).
\end{equation}
\end{proposition}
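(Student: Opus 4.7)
The plan is to split $\sum_{i=1}^{n} f_i |\lambda_i|$ by isolating the distinguished index $r$. For every $i \neq r$, Young's inequality $|\lambda_i| \leq \epsilon \lambda_i^2 + 1/(4\epsilon)$ multiplied by the positive weight $f_i$ yields
\[
\sum_{i \neq r} f_i |\lambda_i| \leq \epsilon \sum_{i \neq r} f_i \lambda_i^2 + \frac{1}{4 \epsilon} \sum_{i \neq r} f_i,
\]
so only the single term $f_r |\lambda_r|$ requires delicate control.

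To handle that term I would use the trace identity
\[
f_r \lambda_r = \sum_{\ell = 1}^{n+1} f_\ell \lambda_\ell - \sum_{i \neq r} f_i \lambda_i + f_\tau u_t
\]
(recalling $\lambda_{n+1} = -u_t$) together with two one-sided estimates on $\sum_\ell f_\ell \lambda_\ell$. A lower bound follows directly from \eqref{c-290}: $\sum_\ell f_\ell \lambda_\ell \geq -K_0 (1 + \sum_i f_i + f_\tau)$. For an upper bound I apply the concavity \eqref{f2} at a fixed reference point $\mu \in \Gamma^+ \subset \Gamma$, say $\mu = (1, \ldots, 1) \in \mathbb{R}^{n+1}$, so that
\[
f(\mu) \leq f(\lambda, -u_t) + \sum_{\ell = 1}^{n+1} f_\ell (\mu_\ell - \lambda_\ell);
\]
after using the equation $f(\lambda, -u_t) = \psi$, this rearranges to $\sum_\ell f_\ell \lambda_\ell \leq \sup_{M_T} \psi - f(\mu) + \sum_i f_i + f_\tau$.

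Splitting on the sign of $\lambda_r$ now closes the argument. When $\lambda_r < 0$, the trace identity and the lower bound give
\[
f_r |\lambda_r| \leq K_0 \Bigl( 1 + \sum_i f_i + f_\tau \Bigr) + \sum_{i \neq r} f_i |\lambda_i| + f_\tau |u_t|;
\]
when $\lambda_r \geq 0$, the same identity and the upper bound produce an analogous inequality with $K_0$ replaced by a constant depending on $\sup_{M_T} \psi$ and on the structural number $|f(\mu)|$. In either case the middle term is absorbed via the Young inequality above, all residual pieces collapse into $C(\epsilon, |u_t|, K_0, \sup_{M_T} \psi)(1 + \sum f_i + f_\tau)$, and a cosmetic rescaling of $\epsilon$ yields \eqref{par-21}.

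The main obstacle is the $\lambda_r \geq 0$ branch, since \eqref{c-290} alone controls $\sum_\ell f_\ell \lambda_\ell$ only from below; the concavity of $f$ evaluated at a fixed point of $\Gamma^+$ supplies the missing complementary bound and is the source of the $\sup_{M_T} \psi$ dependence in $C$. The dependence on $|u_t|$ enters only through estimating the $f_\tau u_t$ term in the trace identity by $f_\tau |u_t|$.
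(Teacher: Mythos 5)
Your argument is correct and uses the same essential ingredients as the paper's proof: a case split on the sign of $\lambda_r$, the lower bound \eqref{c-290} on $\sum_\ell f_\ell \lambda_\ell$, the concavity of $f$ evaluated at the fixed point $\mathbf{1} \in \Gamma^{+}$ for the complementary upper bound (which is where $\sup_{M_T}\psi$ enters), and Young's inequality to absorb the linear terms with $i \neq r$ into $\epsilon \sum_{i \neq r} f_i \lambda_i^2$. The only difference is bookkeeping: the paper decomposes $\sum f_i |\lambda_i|$ by the sign of each $\lambda_i$ (e.g. as $2\sum_{\lambda_i > 0} f_i \lambda_i - \sum f_i \lambda_i$ when $\lambda_r \leq 0$), whereas you isolate the single index $r$ through the trace identity; both routes close in the same way.
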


\begin{proof}
Firstly, if $\lambda_r \leq 0$, by \eqref{c-290}, we have
\[\begin{aligned}
\sum f_i |\lambda_i|
= & 2 \sum_{\lambda_i > 0} f_i \lambda_i - \sum f_i \lambda_i\\
\leq & \epsilon \sum_{\lambda_i > 0} f_i \lambda_i^2 +
\frac{1}{\epsilon} \sum_{\lambda_i > 0} f_i + f_\tau (- u_t) + K_0 (1 + \sum f_i + f_\tau)\\
\leq & \epsilon \sum_{i \neq r} f_i \lambda_i^2 +
C (\epsilon, K_0) \sum f_i  + \max \{|u_t|, K_0\} f_\tau + K_0.
\end{aligned}
\]

Secondly, if $\lambda_r \geq 0$, then by \eqref{f2}, we have
\[
\begin{aligned}
\sum f_i |\lambda_i| = & \sum f_i \lambda_i - 2 \sum_{\lambda_i < 0} f_i \lambda_i\\
                     \leq & \epsilon \sum_{\lambda_i < 0} f_i \lambda_i^2 + \frac{1}{\epsilon}
                      \sum_{\lambda_i < 0} f_i + \sum f_i + f_\tau + f_\tau u_t
                        + \psi - f (\mathbf{1})\\
                        \leq & \epsilon \sum_{\lambda_i < 0} f_i \lambda_i^2
                         + C (\epsilon, |u_t|, \sup_{M_T} \psi) (1 + \sum f_i + f_\tau)
 \end{aligned}
\]
since $f (\mathbf{1}) > 0$ and where $\mathbf{1} = (1, \cdots, 1) \in \mathbb{R}^{n + 1}$.
This proves \eqref{par-21}.
\end{proof}

\section{estimates for $u_t$ }
The assumption \eqref{f3} is crucial for the estimates of $u_t$. In a forthcoming paper we will
consider the estimates of $u_t$ without this restriction.
By the compatibility condition \eqref{comp}, on $BM_T$, we have $u_t = \varphi_t$, which is also valid apparently on
$ \partial M \times [0, T]$. Hence we have
$\sup_{\mathcal{P}M_T} |u_t| \leq C$.
Now by differentiating equation \eqref{eqn'} with respect to $t$ we see that
$\mathcal{L} u_t  =  \psi_t$.
Let $a$ be a positive constant to be determined, by \eqref{gj}, if $|u_t|$ is sufficiently large, by Theorem \ref{3I-th3}, we have
\begin{equation}
\mathcal{L}(- u_t + a (\ul u - u))  \geq - C + a \theta (1 + \sum F^{ii} + F^{\tau})
                                    \geq 0
\end{equation}
when $a \geq \frac{C}{\theta}$.
Similarly we can prove the same result holds for $u_t$.
Thus by maximum principle we have
\begin{equation}
\sup_{M_T} |u_t| \leq C .
\end{equation}
here $C$ depends on $|\psi|_{C^1_t}$, $|u|_{C^0}$, $|\varphi|_{C^1_t}$and other known data.

Therefore, the estimate for $|u_t|$ implies that the constants $C(\epsilon, |u_t|, K_0, \sup_{M_T} \psi)$
in Proposition \ref{par-2} are bounded, which enables us to apply the
mechanism in \cite{G} to derive the second order boundary estimates.

\section{$C^{2}$ global estimates}

In this section, we derive \emph{a priori} global estimates for the second order derivatives. We set
\[W = \max_{(x,t) \in \overline {M_T}} \max_{\xi \in T_x M, |\xi| = 1}
     (\nabla_{\xi\xi} u + \chi_{\xi \xi})
        \exp (\frac{\delta}{2} |\nabla u|^{2} + a (\underline{u} - u)),\]
where $a \gg 1 \gg \delta$
are positive constants to be determined later. It suffices to estimate $W$.
We may assume $W$ is achieved at $(x_{0}, t_{0}) \in \overline {M_T} - \mathcal{P} M_T$
for some unit vector $\xi \in T_{x_0} M^n$.
Choose a smooth orthonormal local frame $e_{1}, \ldots, e_{n}$ about $x_{0}$
such that $e_1 (x_0) = \xi$, $\nabla_{e_i} e_j = 0$,  and
$\{U_{ij} (x_0, t_0)\}$ is diagonal.
We may also assume $U_{11} \geq \ldots \geq U_{n n}$, $U_{11} \geq \sup_{M_T} |u_t|$.
Therefore $W = U_{11} (x_0, t_0) e^{\phi (x_0, t_0)}$,
where $\phi = \frac{\delta}{2} |\nabla u|^{2} + a (\underline{u} - u)$,
and $|U_{ii}| \leq n |U_{11}|$ which derived from $- u_t + U_{11} + \cdots + U_{nn} > 0$.

At the point $(x_{0}, t_{0})$ where the function
$\log (U_{11}) + \phi$ attains its maximum,
we have
\begin{equation}
\label{gs3}
\frac{\nabla_i U_{11}}{U_{11}} + \nabla_i \phi = 0
    \mbox{ for each } i = 1, \ldots, n,
\end{equation}
\begin{equation}
\label{gs4}
\frac{(\nabla_{11} u)_t}{U_{11}} + \phi_t \geq 0,
\end{equation}
and
\begin{equation}
\label{gs5}
0 \geq \sum_{i = 1}^{n} F^{ii} \Big\{ \frac{\nabla_{ii} U_{11}}{U_{11}}
   - \frac{(\nabla_i U_{11})^{2}}{U^{2}_{11}} + \nabla_{ii} \phi \Big\}.
\end{equation}
Differentiating equation (\ref{eqn}) twice, we obtain
\begin{equation}
\label{gs1}
F^{ij} \nabla_{k} U_{ij} - F^{\tau} \nabla_k u_t = \nabla_k \psi \;\; \mbox{ for } \; 1 \leq k \leq n,
\end{equation}
and
\begin{equation}
\begin{aligned}
\label{gs2}
F^{ij} \nabla_{11} U_{ij} + & \, F^{ij,kl} \nabla_1 U_{ij} \nabla_1 U_{kl}
  + F^{\tau\tau} (\nabla_1 u_t)^2 \\
 -& \, 2 F^{ij, \tau} \nabla_1 U_{ij} \nabla_1 u_t - F^\tau \nabla_{11} u_t
 =  \nabla_{11} \psi .
\end{aligned}
\end{equation}

Hence, combining \eqref{gs3}, \eqref{gs4}, \eqref{gs5} and \eqref{gs2}, and noting $ \nabla_{ii}U_{11} \geq \nabla_{11} U_{ii} - C U_{11}$
(see \cite{G}), we have
\begin{equation}
\label{gs11}
 \mathcal{L} \phi  \leq  E + C\Big(1 + \sum F^{ii} \Big)
\end{equation}
when $U_{11}$ is sufficiently large,
where
\[
\begin{aligned}
 E = \, &  \frac{F^{ii} (\nabla_{i} U_{11})^{2}}{{U^{2}_{11}}}
 + \frac{  F^{ij,kl} \nabla_1 U_{ij} \nabla_1 U_{kl}
  - 2 F^{ij, \tau} \nabla_1 U_{ij} \nabla_1 u_t
   + F^{\tau\tau} (\nabla_1 u_t)^2 }{U_{11}}.
\end{aligned}
\]
By some straightforward calculation, we have, at $(x_0, t_0)$,
\begin{equation}
\label{test1}
\nabla_i \phi =  \delta \nabla_j u \nabla _{ij} u + a \nabla_i (\ul u - u), \;\;
        \phi_t =  \delta \nabla_j u (\nabla_j u)_t + a (\ul u - u)_t,
\end{equation}
\begin{equation}
\label{test2}
\begin{aligned}
\nabla_{ii} \phi = \,& \delta (\nabla_{ij} u \nabla_{ij} u
    + \nabla_j u \nabla_{iij} u) + a \nabla_{ii} (\ul u - u)\\
      \geq \,& \frac{\delta}{2} U_{ii}^2 - C \delta
         + \delta \nabla_{j} u \nabla_{iij} u + a \nabla_{ii} (\ul u - u).
\end{aligned}
\end{equation}
Thus, by \eqref{gs1} and \eqref{hess-A70} we have,
\begin{equation}
\label{test4}
\begin{aligned}
F^{ii} \nabla_{ii} \phi \geq \,& \frac{\delta}{2} F^{ii} U^2_{ii}
+ \delta \nabla_j u F^{ii} (\nabla_{jii} u + R^l_{iij} \nabla_l u)\\
    & + a F^{ii} \nabla_{ii} (\ul u - u)  - C \delta \sum F^{ii}\\
     \geq \,& \frac{\delta}{2} F^{ii} U^2_{ii} + \delta F^{\tau} \nabla_{j} u \nabla_j u_t + a F^{ii} \nabla_{ii} (\ul u - u) \\
     \, &  - C \delta (1 + \sum F^{ii}).
\end{aligned}
\end{equation}
Therefore, by \eqref{gs11}, \eqref{test1} and \eqref{test4} we obtain
\begin{equation}
\label{gs12}
\begin{aligned}
 a \mathcal{L} (\ul{u} - u) \,& \leq  E  - \frac{\delta}{2} F^{ii} U^2_{ii}
      +  C ( 1+ \sum F^{ii} ).
\end{aligned}
\end{equation}

For fixed $0 < s \leq 1/3$, let
\[
J  = \{i: U_{ii} \leq - s U_{11}, 1 < i \leq n\}, \;\;
K  = \{i:  U_{ii} > - s U_{11}, 1 \leq i \leq n\}.
\]
Using a result of Andrews \cite{A} and Gerhardt \cite{GC}
(see \cite{U} also), and noting that $U_{n+1 j} = 0$ for all $j = 1, 2, \cdots, n$, we have,
\begin{equation}
\label{gj-S130}
\begin{aligned}
 - F^{ij, kl} \nabla_1 U_{ij} \nabla_1 U_{kl} + \,& 2 F^{ij, \tau} \nabla_1 U_{ij} \nabla_1 u_t - F^{\tau\tau} \nabla_1 u_t \nabla_1 u_t\\
\geq \,& \sum_{1 \leq i \neq j \leq n + 1} \frac{F^{ii} - F^{jj}}{U_{jj} - U_{ii}}
           (\nabla_1 U_{ij})^2 \\
 \geq \,& 2 \sum_{2 \leq i \leq n} \frac{F^{ii} - F^{11}}{U_{11} - U_{ii}}
            (\nabla_1 U_{i 1})^2 \\
\geq \,& \frac{2 (1-s)}{(1+s) U_{11}} \sum_{i \in K} (F^{ii} - F^{11})
            ((\nabla_i U_{11})^2 - C /s),
\end{aligned}
\end{equation}
where in the last inequality we used the following result which can be readily proved with \eqref{hess-A70},
that for any $ s \in (0, 1)$
\begin{equation}
 \label{chu1}
 \begin{aligned}
  (1 - s)(\nabla_i U_{11})^2 \leq (\nabla_1 U_{1i})^2 + C (1 - s)/s .
 \end{aligned}
\end{equation}
From \eqref{gj-S130}, combining \eqref{gs3} and that $\nabla_i \phi \leq \delta \nabla_i u U_{ii} + C a$ at $(x_0, t_0)$, we get
\begin{equation}
\label{gj-S140}
\begin{aligned}
 E
   \leq \,& \sum_{i \in J} F^{ii} (\nabla_i \phi)^2
             +  C \sum_{i \in K} F^{ii} + C F^{11} \sum_{i \in K} (\nabla_i \phi)^2\\
   \leq \,& C a^2 \sum_{i \in J} F^{ii}  + C \delta^2 F^{ii} U_{ii}^2
            +  C \sum_{i \in K} F^{ii} + C (\delta^2 U_{11}^2 + a^2) F^{11}.
\end{aligned}
\end{equation}
Therefore by \eqref{gs12} and \eqref{gj-S140}, we finally obtain
\begin{equation}
\label{gs7}
\begin{aligned}
0 \geq \,& (\frac{\delta}{2} - C \delta^2 ) F^{ii} U_{ii}^2  - C a^2 \sum_{i \in J} F^{ii}
              - C (\delta^2 U_{11}^2 + a^2) F^{11} \\
              & + a \mathcal{L}(\ul u - u ) - C (1 + \sum F^{ii}).
\end{aligned}
\end{equation}
Observe that
\begin{equation}
F^{ii} U_{ii}^2 \geq F^{11} U_{11}^2 + \sum_{i \in J} F^{ii} U_{ii} \geq F^{11} U_{11}^2 + s^2 U_{11}^2 \sum_{i \in J} F^{ii}.
\end{equation}
We may firstly choose $\delta$ small sufficiently such that
$\frac{\delta}{2} - C \delta^2 > c_0 > 0$ .
Then we assume $U_{11} > R$, where $R$ is the positive constant such that \eqref{gj} holds and fix $a$ large
enough so that $a \mathcal{L}(\ul u - u ) - C (1 + \sum F^{ii}) \geq 0 $ holds,
then we would get a contradiction provided $U_{11}$ is sufficiently large from \eqref{gs7}.
Thus we get an upper bound for $U_{11}$.

\section{$C^{2}$ boundary estimates}

Throughout this section
we assume the function $\varphi \in C^{4,1} (\mathcal{P} M_T)$ is extended to
a $C^{4,1}$ function on $\overline {M_T}$, which is still denoted by $\varphi$.

Fix a point $(x_{0}, t_{0}) \in S M_T$. We shall choose a
smooth orthonormal local frame $e_1, \ldots, e_n$ around $x_0$ such that
when restricted to $\partial M$, $e_n$ is normal to $\partial M$.
Since $u - \ul{u} = 0$ on $S M_T$, we have
\begin{equation}
\label{hess-a200}
\nabla_{\alpha \beta} (u - \ul{u})
 = -  \nabla_n (u - \ul{u}) \varPi (e_{\alpha}, e_{\beta}), \;\;
\forall \; 1 \leq \alpha, \beta < n \;\;
\mbox{on  $SM_T$},
\end{equation}
where 
$\varPi$ denotes the second fundamental form of $\partial M$.
Therefore,
\[|\nabla_{\alpha \beta} u| \leq  C,  \;\; \forall \; 1 \leq \alpha, \beta < n  \;\; \mbox{on} \;\;  S M_T. \]

Let $\rho (x)$ denote the distance from $x \in M$ to $x_{0}$,
and set
\[M_T^{\delta} = \{X = (x, t) \in M \times (0,T]:
      \rho (x) < \delta, \; t \leq t_{0} + \delta\}.\]
Since $\partial M$ is smooth, we may also assume the distance function $d (x, t) \equiv d (x)$
to the boundary $SM_T$ is smooth in $M_T^{\delta}$.
\begin{lemma}
\label{lem1}
There exist some uniform positive constants $a, \delta, \varepsilon$ sufficiently small and $N$
sufficiently large such that the function
\[v = (u - \underline{u}) + ad - \frac{Nd^{2}}{2}\]
satisfies
\begin{equation}
\label{v}
\mathcal{L} v \leq - \varepsilon (1 + \sum F^{ii} + F^\tau) \mbox{ in }M_T^{\delta},
     \ v \geq 0\mbox{ on } \mathcal{P} M_T^{\delta}.
\end{equation}
\end{lemma}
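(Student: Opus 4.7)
The plan is a standard barrier construction, in which I will fix the constants in the order $a$ small, then $N$ large, then $\delta$ small so that the two requirements in \eqref{v} are simultaneously met. Below $\theta$ and $R$ denote the constants from \eqref{gj}, and $C_0$ is a uniform bound for $|\nabla^2 d|$ in $M_T^\delta$.

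I would first verify the sign condition $v \geq 0$ on $\mathcal{P} M_T^\delta$. On $S M_T \cap \mathcal{P} M_T^\delta$ we have $d = 0$ and $u = \underline{u}$, so $v = 0$; on $B M_T \cap \overline{M_T^\delta}$, $u = \varphi \geq \underline{u}$; and on the lateral face $\{\rho = \delta\} \cap \overline{M_T^\delta}$, the comparison principle already invoked for \eqref{C0} gives $u - \underline{u} \geq 0$. Since $ad - \tfrac{N}{2} d^2 = d(a - \tfrac{N}{2} d) \geq 0$ whenever $d \leq 2a/N$, imposing $\delta \leq 2a/N$ will force $v \geq 0$ throughout $\mathcal{P} M_T^\delta$.

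Next I would tackle the differential inequality. Using $d_t = 0$, direct expansion gives
\[\mathcal{L} v = \mathcal{L}(u - \underline{u}) + a F^{ij} \nabla_{ij} d - N d F^{ij} \nabla_{ij} d - N F^{ij} \nabla_i d\, \nabla_j d,\]
so the two middle terms are bounded in absolute value by $(a + N\delta) C_0 \sum F^{ii}$. In the regime $|\lambda[\hat{U}]| \geq R$, \eqref{gj} yields $\mathcal{L}(u - \underline{u}) \leq -\theta (1 + \sum F^{ii} + F^\tau)$; dropping the nonpositive term $-N F^{ij} \nabla_i d\, \nabla_j d$ and absorbing the middle terms into $-\theta \sum F^{ii}$ produces $\mathcal{L} v \leq -\tfrac{\theta}{2}(1 + \sum F^{ii} + F^\tau)$ as soon as $2 a C_0 \leq \theta/2$ and $2 N \delta C_0 \leq \theta/2$.

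The hard part will be the remaining regime $|\lambda[\hat{U}]| < R$, which is precisely what forces $N$ to be taken large. There $\hat{U}$ ranges over a compact subset of the admissible symmetric matrices, so by smoothness and strict positivity of $F$ on compact subsets of $\Gamma$, the eigenvalues of $[\{F^{ij}\}, F^\tau]$ are pinched in a compact subset of $(0, \infty)$; in particular $\sum F^{ii} + F^\tau \leq C_1$, $|\mathcal{L}(u - \underline{u})| \leq C_1$, and $F^{ij} \nabla_i d\, \nabla_j d \geq \lambda_0 > 0$ for a uniform $\lambda_0$. This gives
\[\mathcal{L} v \leq C_1 + (a + N\delta) C_0 C_1 - N \lambda_0,\]
which, once $N$ is chosen large enough in terms of $C_1, \lambda_0, a$, will be at most $-\varepsilon(1 + C_1) \leq -\varepsilon(1 + \sum F^{ii} + F^\tau)$ for a uniform $\varepsilon > 0$. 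A final choice of $\delta$ small enough that $\delta \leq 2a/N$, $2 N \delta C_0 \leq \theta/2$, and $N\delta$ stays bounded then secures all conditions, with $\varepsilon$ taken as the minimum of the two regime-dependent constants.
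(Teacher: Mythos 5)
Your proof is correct and follows essentially the same route as the paper: the same expansion of $\mathcal{L}v$, the same split into the regimes $|\lambda[\hat U]|\geq R$ (where \eqref{gj} supplies the negative term) and $|\lambda[\hat U]|<R$ (where uniform parabolicity on a compact subset of $\Gamma$ makes $-NF^{ij}\nabla_i d\,\nabla_j d$ dominate), and the same constraint $\delta\leq 2a/N$ for the boundary sign. The only cosmetic differences are the order in which you fix the constants and your use of $|\mathcal{L}(u-\underline{u})|\leq C_1$ where the paper simply invokes $\mathcal{L}(u-\underline{u})\leq 0$ from the subsolution property.
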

\begin{proof}
We note that to ensure $v \geq 0$ in $\mathcal{P} M_{\delta}$
we may require $\delta \leq 2a/N$ after $a, N$ being fixed. It is easy to see that
\begin{equation}
\label{bs4}
\mathcal{L} v
      \leq  \mathcal{L}(u - \ul u) + C (a + Nd) \sum F^{ii}
      - N F^{ij} \nabla_i d \nabla_j d.
\end{equation}
Fix $\theta > 0$ small and $R > 0$ large enough such that \eqref{gj} holds
at every point in $\bar{M}_T^{\delta_{0}}$ for some fixed $\delta_{0} > 0$.
Let
$\lambda = \lambda [U]$ be the eigenvalues of $U$. At a fixed point in $M_{\delta}$
where $\delta < \delta_0$,
we consider two cases: (a) $|\lambda| < R$ and (b) $|\lambda| \geq R$.

In case (a), since $|u_t| \leq C$, by \eqref{f5}, there are uniform bounds
\[c_{1} I \leq \{F^{ij}\} \leq C_{1} I, \;\; c_1 \leq F^\tau \leq C_1\]
for some positive constants $c_1, C_1$,
and therefore $F^{ij} \nabla_{i} d \nabla_{j} d \geq c_{1}$ since $|\nabla d| \equiv 1$.
Since $\mathcal{L}(u - \ul u) \leq 0$, we may fix $N$ large enough
so that Lemma \ref{lem1} holds for any $a, \varepsilon \in (0,1]$, as long as $\delta$ is sufficiently small.

In case (b), since $F^{ij}\nabla_i d \nabla_j d \geq 0$, by \eqref{gj} and (\ref{bs4}) we may further require $a$ and $\delta$ small
enough so that Lemma \ref{lem1} holds.
\end{proof}

With the help of
$ \nabla_{ij} (\nabla_{k} u)
  = \nabla_{ijk} u + \Gamma_{ik}^l \nabla_{j l} u + \Gamma_{jk}^l \nabla_{ i l} u
  +  \nabla_{\nabla_{ij} e_k} u $
and \eqref{gs1}, we obtain
\begin{equation}
\label{hess-E170}
\begin{aligned}
|\mathcal{L} \nabla_k (u - \varphi)|
         \leq \,& 2 |F^{ij}  \Gamma_{ik}^l \nabla_{j l} u|  + C \Big(1 + \sum F^{ii} + F^\tau \Big) \\
              \leq \,& C \Big(1 + \sum f_i |\lambda_i| + \sum f_i + f_\tau \Big).
\end{aligned}
\end{equation}
According to \eqref{hess-E170} we have
\begin{equation}
 \mathcal{L} |\nabla_{\gamma} (u - \varphi)|^2
\geq  F^{ij} U_{i \gamma} U_{j \gamma}
         - C \Big(1 + \sum f_i |\lambda_i| + \sum f_i + f_\tau \Big).
\end{equation}
Let
\begin{equation}
\label{hess-E176}
 \varPsi
   = A_1 v + A_2 \rho^2 - A_3 \sum_{\gamma < n} |\nabla_{\gamma} (u - \varphi)|^2.
\end{equation}
For any $K > 0$, since $\nabla_l (u - \varphi) = 0$ on $\partial M$ with $1 \leq l \leq n-1$,
when $A_2 \gg A_3 \gg 1$, we have $( A_2 - K ) \rho^2 \geq A_3 \sum_{l < n} |\nabla_l (u - \varphi)|^2$
in $\overline M_T^\delta$.
Hence we can choose $A_1 \gg A_2 \gg A_3 \gg 1$ such that $\varPsi \geq K (d + \rho^2)$
in $\overline M_T^\delta$.
By Proposition \ref{par-1} and Proposition \ref{par-2} and Lemma \ref{lem1}, it follows that in $M_T^{\delta}$,
$\mathcal{L} (\varPsi \pm \nabla_{\alpha} (u - \varphi)) \leq 0$,
and $\varPsi \pm \nabla_{\alpha} (u - \varphi) \geq 0$ on $\mathcal{P} M_T^{\delta}$
when $A_1 \gg A_2 \gg A_3 \gg 1$.
 By the maximum principle we derive
$\varPsi \pm \nabla_{\alpha} (u - \varphi) \geq 0$
in $M_T^{\delta}$ and therefore
\[
|\nabla_{n \alpha} u (x_0, t_0)| \leq \nabla_n \varPsi (x_0, t_0) \leq C,
\;\; \forall \; \alpha < n.
\]

It remains to derive
$\sup_{S M_T} \nabla_{n n} u \leq C$,
since $- u_t + \triangle u + \sum \chi_{ii} \geq - C$.
For $(x, t) \in SM_T$, let $\tilde U (x, t)$ be the restriction to $T_x \partial M$ of $U (x, t)$,
viewed as a bilinear map on
the tangent space of $\partial M$ at $x$,
and $\lambda ( \tilde U (x, t) )$ be the eigenvalues with respect to the induced metric of $(M^n, g)$ on $\partial M$.
Similarly one can define $\tilde {\ul U } (x, t)$ and $\lambda (\tilde {\ul U } (x, t) )$.
On $SM_T$, we define that
\[\tilde{F} ([\tilde U, - u_t]) := \lim_{R \rightarrow \infty} f (\lambda (\tilde U), R, -u_t). \]
Due to Trudinger \cite{T}, we need only show that
the following quantity
\[m := \min_{(x, t) \in S M_T}
     \Big( \tilde F ([\tilde U, -u_t])(x, t) - \psi (x,t) \Big)\]
is positive (see \cite{G}). We can assume that $m$ is finite.
Note that $\tilde{F}$ is concave, and it is easily seen that the following holds
\[\begin{aligned}
\hbar := \,& \min_{(x, t) \in S M_T} \Big( \tilde{F}( [\tilde {\ul U}, - \ul u_t]) (x, t) - \psi (x, t) \Big) > 0,
\end{aligned}\]
and $\hbar$ may equal to infinity.
Without loss of generality we assume $m < \hbar/2$.
Suppose $m$ is achieved at a point $(x_{0}, t_{0}) \in S M_T$. Now we give some notations.
Choose a local orthonormal frame $e_1 \ldots, e_n$ around $x_0$ as before,
that is $e_n$ is normal to $\partial M$. Therefore locally we have $\tilde U = \{U_{\alpha \beta}\}$,
where $1 \leq \alpha, \beta \leq n-1$. We denote that
$\tilde{F}^{\alpha \beta}_0$ is the first order derivative of $\tilde F$ with respect to $\tilde U_{\alpha \beta}$
at $(x_0, t_0)$, and $\tilde{F}^{\tau}_0$ is the first order derivative of $\tilde F$
with respect to $- u_t$ at $(x_0, t_0)$.

By \eqref{hess-a200} we have on $ SM_T $,
\begin{equation}
\label{c-220}
 U_{\alpha {\beta}} = \ul{U}_{\alpha {\beta}}
    - \nabla_n (u - \ul{u}) \sigma_{\alpha {\beta}}
\end{equation}
where
$\sigma_{\alpha {\beta}} = \langle \nabla_{\alpha} e_{\beta}, e_n \rangle$;
note that
$\sigma_{\alpha \beta} = \varPi (e_\alpha, e_\beta)$ on
$SM_T$. 
Since on $SM_T$ we have $\ul{u}_t = u_t$,
it follows that at $(x_0, t_0)$,
\begin{equation}
\label{c-225}
\begin{aligned}
 \nabla_n (u - \ul{u}) \tF^{\alpha {\beta}}_0 \sigma_{\alpha {\beta}}
   = \,& \tF^{\alpha {\beta}}_0 (\ul{U}_{\alpha \beta} - U_{\alpha  {\beta}})
\geq \tF[\ul{U}_{\alpha \beta}, -\ul{u}_t] - \tF[U_{\alpha \beta}, -u_t] \\
  =  \,& \tF[\ul{U}_{\alpha {\beta}}, - \ul u_t] - \psi - m
 \geq \hbar - m.
\end{aligned}
\end{equation}
Setting $\eta = \tF^{\alpha {\beta}}_0 \sigma_{\alpha {\beta}}$ which is well defined in $M_T^\delta$,
by \eqref{c-225} we obtain
\begin{equation}
\label{c-230}
\eta (x_0) \geq \frac{\hbar}{2 \nabla_n (u - \ul{u}) (x_0, t_0)}
                \geq \epsilon_1 \hbar > 0
\end{equation}
for some uniform $\epsilon_1 > 0$.

Let
\[ Q \equiv \tF^{\alpha {\beta}}_0 \Big( \ul U_{\alpha {\beta}} -  U_{\alpha {\beta}} (x_0, t_0) \Big)
           + \tF^\tau_0 \Big( u_t(x_0, t_0) - \varphi_t \Big) + \psi (x_0, t_0) - \psi (x, t), \]
and we see that $Q$ is well defined in $M_T^\delta$ for small $\delta$ .
Since $\tF$ is concave, we have on $ SM_T$
\begin{equation}
\label{c-210}
\begin{aligned}
\tF^{\alpha {\beta}}_0 \Big(U_{\alpha  {\beta}} - U_{\alpha  {\beta}} (x_0, t_0)\Big)
 & - \tF_0^\tau \Big(u_t -  u_t (x_0, t_0)\Big) - F (\hat U ) + F (\hat U (x_0, t_0))\\
 & \geq \tF ([U_{\alpha  {\beta}}, -u_t]) - F (\hat U ) - m  \geq 0 .
\end{aligned}
\end{equation}
We define
\[ \varPhi = - \eta \nabla_n (u - \ul u) + Q ;\]
it follows from \eqref{c-210} and \eqref{c-220}
that $\varPhi (x_0, t_0) = 0$ and
$\varPhi \geq 0$ on $ SM_T $.
For $(x, 0) \in B M_T^\delta$ we have
\[
\varPhi (x, 0) \geq \varPhi (\hat x, 0) - C d (x) \geq - C d (x),
\]
where $C$ depends on $C^1$ norms of $\nabla^2 \ul u$, $\psi$, $\varphi_t$, and
$\hat x \in \partial M$ such that $d (x) = {dist}_{M} (x, \hat x)$ .
Besides with some calculation, we have
\begin{equation}
\label{gblq-B360}
 \begin{aligned}
\mathcal{L} \varPhi
  \leq   C \sum f_i + C \sum f_i |\lambda_i| + C f_\tau + C .
\end{aligned}
\end{equation}
Therefore, applying Proposition~\ref{par-1}, Proposition~\ref{par-2} and Lemma~\ref{lem1} again,
as well as choosing $A_1 \gg A_2 \gg A_3 \gg 1$, we derive
\begin{equation}
\label{cma-106}
  \left\{ \begin{aligned}
\mathcal {L} \,&  (\varPsi + \varPhi) \leq  0 \;\; \mbox{in $M_T^\delta$}, \\
        & \varPsi + \varPhi \geq 0 \;\; \mbox{on $\mathcal{P} M_T^\delta$}.
\end{aligned} \right.
\end{equation}
By the maximum principle,
$\varPsi +  \varPhi \geq 0$ in $M_T^\delta$. Thus
$\nabla_n \varPhi (x_0, t_0) \geq - \nabla_n \varPsi (x_0, t_0) \geq -C $.
While we also have
\begin{equation}
\label{c-255}
 - C \leq \nabla_n \varPhi (x_0, t_0)
   \leq  \Big(- \eta (x_0) \Big) \nabla_{nn} u (x_0, t_0) + C.
 \end{equation}
This with \eqref{c-230} yields that
\[\nabla_{nn} u (x_0, t_0) \leq  \frac{C}{\epsilon_1 \hbar}.\]

By now we have got \emph{a priori} upper bounds for all eigenvalues of $\{U_{ij} (x_0, t_0)\}$
and hence the eigenvalues are contained in a compact subset of $\Gamma$
by \eqref{f5}. Therefore by \eqref{f1} we obtain
$m > 0$ (for the detailed proof one can see \cite{GSS}).
This completes the proof of Theorem \ref{bd-th1}.

\section{Gradient estimates}

We now deal with the interior estimates of $|\nabla u|$ with conditions appeared in
Guan \cite{G1} and Li \cite{Li90} respectively which correspond to the two cases in Theorem \ref{bd-th2}.

Case $(\mathbf{i})$: we set
\[W = \sup_{(x, t) \in M_T}  |\nabla u|  \phi^{-\delta},\]
where $\phi = - u + \sup u + 1$ and $\delta < 1$ a positive constant. It suffices to estimate $W$. We may assume $W$ is achieved at
$(x_{0}, t_{0}) \in \overline {M_T} - \mathcal{P} M_T$.
Choose a smooth orthonormal local frame $e_1, \ldots, e_n$
about $x_0$ as before such that $\nabla_{e_i} e_j = 0$ at $x_0$.
Assume $U (x_0, t_0)$ is diagonal. Differentiating the function $\log \omega  - \delta \log \phi$
at $(x_0, t_0)$, where $\omega = |\nabla u|$, we obtain,
\begin{equation}
\label{g1}
 \frac{\nabla_i \omega}{\omega} - \frac{\delta \nabla_i \phi}{\phi} = 0, \; \mbox{ for every }i = 1, \ldots, n,
\end{equation}
\begin{equation}
\label{g0}
 \frac{\omega_t}{\omega} - \frac{\delta \phi_t}{\phi} \geq 0,
\end{equation}
and
\begin{equation}
\label{g2}
\begin{aligned}
 0 \geq \frac{\nabla_{ii}\omega}{\omega} + \frac{(\delta - \delta^2) |\nabla_i \phi|^2}{\phi^2} - \frac{\delta \nabla_{ii} \phi}{\phi}.
\end{aligned}
\end{equation}
Next, by \eqref{g1} and \eqref{hess-A70},
\begin{equation}
\label{w}
\begin{aligned}\omega \nabla_{ii} \omega
= \;& (\nabla_{lii} u + R^k_{iil} \nabla_k u)\nabla_l u + \nabla_{il} u \nabla_{il} u - \frac{\delta^2 \omega^2 (\nabla_i \phi)^2}{\phi^2}\\
\geq \;& \nabla_l U_{ii} \nabla_l u - C|\nabla u|^2
- \frac{\delta^2 \omega^2 (\nabla_i \phi)^2}{\phi^2}.
\end{aligned}
\end{equation}
Now multiply $F^{ii}$ on both side of \eqref{g2} and substitute \eqref{w} in it. It follows that
\begin{equation}
\label{w2}
\begin{aligned}
0 \geq \;& \frac{1}{\omega^2} F^{ii} \nabla_l u \nabla_l U_{ii} - C \sum F^{ii} - \frac{\delta F^{ii} \nabla_{ii} \phi}{\phi}\\
\;& + \frac{(\delta - 2\delta^2) }{\phi^2}  F^{ii} |\nabla_i \phi|^2.
\end{aligned}
\end{equation}
Now compute $F^{ii} \nabla_l u \nabla_l U_{ii}$ using \eqref{gs1}. With \eqref{g0} we have
\begin{equation}
\label{w3}
\begin{aligned}
F^{ii} \nabla_l u \nabla_l U_{ii} =  \nabla_l u \psi_{x_l} + F^{\tau} \nabla_l u \nabla_l u_t
\geq - C |\nabla u| + \frac{\delta \omega^2}{\phi} F^{\tau} \phi_t
\end{aligned}
\end{equation}
Therefore, in view of \eqref{w2} and \eqref{w3}, we obtain
\begin{equation}
\label{w4}
\begin{aligned}
0 \geq \;& - C (1 + \sum F^{ii})
+\frac{\delta}{\phi} (F^{\tau} \phi_t - F^{ii} \nabla_{ii} \phi) + \frac{(\delta - 2\delta^2) }{\phi^2} \sum F^{ii} |\nabla_i \phi|^2.
\end{aligned}
\end{equation}
Besides by \eqref{c-290}, we have
\begin{equation}
\label{w6}
F^{\tau} \phi_t - F^{ii} \nabla_{ii} \phi = \sum F^{ii}U_{ii} - F^{\tau} u_t - F^{ii} \chi_{ii} \geq - C (1 + \sum F^{ii} + F^\tau).
\end{equation}

Without loss of generality, we may assume
$|\nabla u (x_{0}, t_0)| \leq n \nabla_{1} u (x_{0}, t_0)$.
By \eqref{g1} and noting that $U(x_0, t_0)$ is diagonal, we derive at $(x_0, t_0)$
\begin{equation}
\label{w5}
U_{11} = \sum_{k \geq 2} \frac{\nabla_k u \chi_{1k}}{\nabla_1 u} - \frac{\delta |\nabla u|^2}{2 \phi} + \chi_{11}
\leq  C - \frac{\delta |\nabla u|^2}{2 \phi}.
\end{equation}
Therefore, if $|\nabla u|$ is sufficiently large (otherwise we are done), by \eqref{f6},
we obtain
\[F^{ii} (\nabla_i u)^2 \geq F^{11}|\nabla_1 u|^2 \geq \nu_0/n^2 |\nabla u|^2 (1 + \sum f_i + f_\tau). \]
 Choosing $0 < \delta < \frac{1}{2}$ and substituting the above inequality and \eqref{w6} in \eqref{w4},
 then it follows $|\nabla u(x_0, t_0)| \leq C$, and $C$ depends on $|\psi|_{C^1_x (\overline {M_T})}$,
 $|u|_{C^0 (\overline {M_T})}$, $K_0$ and other known data.

Case $(\mathbf{ii})$: Since $(M^n, g)$ has nonnegative sectional curvature, under an orthonormal local frame, i.e.
$R^k_{iil} \nabla_k u \nabla_l u \geq 0$.
In the proof of case $(\mathbf{i})$, we therefore have in place of \eqref{w},
\begin{equation}
\label{w7}
\begin{aligned}
\omega \nabla_{ii} \omega \geq  (\nabla_{lii} u + R^k_{iil} \nabla_k u)\nabla_l u
                           \geq   \nabla_l u \nabla_l U_{ii} - C |\nabla u|.
\end{aligned}
\end{equation}
Taking $\phi = u - \ul u + \sup |u - \ul u| + 1$ and $\delta < 1$, by \eqref{g0}, \eqref{g2}, \eqref{w3} and \eqref{w7}, we obtain
\begin{equation}
\label{w8}
0 \geq \frac{\delta}{\phi} \mathcal{L} (\ul u - u) + \frac{(\delta - \delta^2) }{\phi^2} \sum F^{ii} |\nabla_i \phi|^2
 - \frac{C}{|\nabla u|} (1 + \sum F^{ii})
\end{equation}
Now if $\lambda[U(x_0, t_0), - u_t (x_0, t_0)] \geq R$, by Theorem \ref{3I-th3}, we may derive a bound for $|\nabla u (x_0, t_0)|$.
If $\lambda[U(x_0, t_0), - u_t (x_0, t_0)] \leq R$, since $|u_t| \leq C$ where $C$ is independent of
$|\nabla u|$, by \eqref{f5}, there exists a positive constant $C_0$ such that
$\frac{1}{C_0} I \leq \{F^{ij}\} \leq C_0 I$,
where $I$ is the unit matrix in the set of $n \times n$ symmetric matrices $\mathbb{S}^n$.
 It follows from \eqref{w8} that
\[0 \geq \frac{(\delta - \delta^2) }{C_0 \phi^2 } |\nabla (\ul u- u)|^2 - \frac{C}{|\nabla u|} (1 + n C_0). \]
This proves $|\nabla u (x_0, t_0)| \leq C$ for $\delta < 1$.
Thus Theorem \ref{bd-th2} is completed.

\textbf{Acknowledgement:} The authors wish to thank Doctor Heming Jiao for many insightful suggestions and comments.

\end{document}